\newcommand{\Cay}{\textrm{Cay}}
\newcommand{\dCay}{\overrightarrow{\textrm{Cay}}}
\newtheorem{thm}{Theorem}[section]
\newtheorem{obs}[thm]{Observation}
\newtheorem{lem}[thm]{Lemma}
\newtheorem{conj}[thm]{Conjecture}
\theoremstyle{definition}
\newtheorem{defi}{Definition}[section]
\title{What is and is not inside a Cayley graph?}
\author{Kolja Knauer\thanks{Departament de Matem\'{a}tiques i Inform\'{a}tica, Universitat de Barcelona, Centre de Recerca Matemàtica, Barcelona, Spain.} \and Alvaro Soto Gomez\thanks{Departament de Matem\'{a}tiques i Inform\'{a}tica, Universitat de Barcelona, Barcelona, Spain.}}
\date{}
\begin{document}

\maketitle

\begin{abstract}
In this note we show that there is a cubic graph of girth $5$ that is not a subgraph of any minimal Cayley graph. On the other hand, we show that any Generalized Petersen Graph $G(n,k)$ with $\gcd(n,k)=1$ is an induced subgraph of a minimal Cayley graph. These results give insights into two comments of László Babai in [L. Babai, \emph{Automorphism groups, isomorphism, reconstruction}.
Graham, R. L. (ed.) et al., Handbook of combinatorics. Vol. 1-2, 1994].
\end{abstract}

\section{Introduction}
Given a finite group $\Gamma$ and a \emph{connection set} $ S\subseteq\Gamma$, the (undirected, right) \emph{Cayley graph} $\Cay(\Gamma,  S)$ has vertex set $\Gamma$, where two elements $a, b \in \Gamma$ form an edge if $a^{-1}b \in  S$. Note that we consider Cayley graphs as undirected even if the connection set is not inverse-closed.

A Cayley graph is \emph{minimal} (also known as irreducible) if $ S$ is an inclusion-minimal generating set of $\Gamma$. Minimal Cayley graphs arise naturally: a famous and open problem, often attributed to Lovász, is (equivalent to) the question of whether minimal Cayley graphs are Hamiltonian; see~\cite[Section 4]{PR09}.
Other contexts in which minimal Cayley graphs appear include the study of the genus of a group~\cite{Whi01} and problems related to sensitivity~\cite[Questions 7.7 and 8.2]{GMK22}. Various aspects of these graphs—sometimes for specific groups—and their distinguishing features compared to general Cayley graphs have been explored in~\cite{MS20,SS09,LZ01,HI96,God81,FM16}.

A major question, raised by Babai~\cite{B78,B95}, with only partial recent results~\cite{GMK25,DHY24} is whether minimal Cayley graphs have bounded chromatic number. One way to construct minimal Cayley graphs of large chromatic number, would be to take a known family of graphs of large chromatic number and show that its members are subgraphs of minimal Cayley graphs. However, while every graph is an induced subgraph of some Cayley graph~\cite{BS85,GI87,Bab78}, this is far from true for minimal Cayley graphs. For instance, neither the complete graph on four vertices minus an edge, $K_4 - e$, nor the complete bipartite graph $K_{3,5}$ can be subgraphs of a minimal Cayley graph~\cite{B78}. Moreover, for every $g \geq 3$, there exists a graph of girth $g$ and maximum degree $100$ that is not a subgraph of any minimal Cayley graph~\cite{Spe83}.
A natural question is how much the degree in the above result can be lowered. We quote from~\cite{B95}:
\begin{quote}
It is not known whether or not such excluded subgraphs of girth 5 and degree 3 exist [...] 
The Petersen graph is a subgraph of a minimal Cayley graph of
a group of order 20.
\end{quote}

In this note we give an answer to the first part of the above statement, i.e., we provide a graph of girth 5 and degree 3 that is not a subgraph of a minimal Cayley graph. 
Further we extend the second part of the above statement, by showing that any Generalized Petersen Graph $G(n,k)$ with $\gcd(n,k)=1$ is an induced subgraph of a minimal Cayley graph.

\section{No-lonely-color colorings of digraphs}
It was known since~\cite{B78} that subgraphs of minimal Cayley graphs admit so-called \emph{no-lonely-color colorings} of their edges. However, this necessary property was not sufficient to find a cubic girth $5$ graph, that is not a subgraph of a Cayley graph. 
Hence, we need a stronger necessary condition. This is what we provide in this section.






For a group $\Gamma$ and subset $ S$ the Cayley digraph $\dCay(\Gamma,S)$ has vertex set $\Gamma$, where two elements $a, b \in \Gamma$ form an arc $(a,b)$ if $a^{-1}b \in  S$. Note that \emph{antiparallel} arcs, i.e., of the form $(a,b)$ and $(b,a)$, may occur. We say that $D$ is a \emph{minimal Cayley digraph} if there is an inclusion-minimal generating set if sone group $\Gamma$ such that $D\cong \dCay(\Gamma,S)$. Clearly,
\begin{obs}\label{obs:diundi}
    A digraph $D$ is subdigraph of a minimal Cayley digraph if and only if the underlying graph $G$ of $D$ is subgraph of a minimal Cayley graph.
\end{obs}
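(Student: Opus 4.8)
The plan is to reduce both sides to a single statement about one group $\Gamma$ equipped with one connection set $S$, using that the directed and undirected Cayley graphs over the same data $(\Gamma,S)$ carry the same adjacency information up to orientation. The crucial preliminary step is therefore the directed/undirected dictionary: I would check that the underlying graph of $\dCay(\Gamma,S)$ is exactly $\Cay(\Gamma,S)$. Indeed, $\{a,b\}$ is an edge of the underlying graph iff $(a,b)$ or $(b,a)$ is an arc of $\dCay(\Gamma,S)$, i.e. iff $a^{-1}b\in S$ or $b^{-1}a\in S$, which is precisely the undirected adjacency rule for $\Cay(\Gamma,S)$ (recall we allow $S$ not inverse-closed). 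Since ``minimal'' means, in both the directed and the undirected setting, that $S$ is an inclusion-minimal generating set of $\Gamma$, it follows that $\dCay(\Gamma,S)$ is a minimal Cayley digraph if and only if $\Cay(\Gamma,S)$ is a minimal Cayley graph; minimal Cayley digraphs and minimal Cayley graphs are thus linked by the underlying-graph operation over the same $(\Gamma,S)$.

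The forward implication is then immediate. If $D$ is a subdigraph of a minimal Cayley digraph $\dCay(\Gamma,S)$, then, since passing to underlying graphs preserves the subgraph relation, $G$ is a subgraph of the underlying graph of $\dCay(\Gamma,S)$, which by the dictionary is the minimal Cayley graph $\Cay(\Gamma,S)$.

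For the converse, suppose $G$ is a subgraph of a minimal Cayley graph $\Cay(\Gamma,S)$, and fix an embedding $V(G)\hookrightarrow\Gamma$. For each edge $\{a,b\}$ of $\Cay(\Gamma,S)$ we have $a^{-1}b\in S$ or $b^{-1}a\in S$, so at least one of the arcs $(a,b)$, $(b,a)$ belongs to $\dCay(\Gamma,S)$; choosing one such arc per edge of $G$ upgrades the given embedding to an embedding, as a subdigraph of the minimal Cayley digraph $\dCay(\Gamma,S)$, of a digraph with underlying graph $G$.

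The one point that I would take care to state precisely — and the only real subtlety — is what the converse actually delivers: the orientation produced is the one forced by the embedding into $\dCay(\Gamma,S)$ (edges labelled by an involution of $S$ in fact carry both arcs), not an arbitrary preassigned orientation of $G$. Read this way, the observation says: $G$ is a subgraph of some minimal Cayley graph iff some digraph with underlying graph $G$ is a subdigraph of some minimal Cayley digraph; equivalently, $G$ is a subgraph of no minimal Cayley graph iff no digraph with underlying graph $G$ is a subdigraph of any minimal Cayley digraph, which is the form in which we will apply it. Once the dictionary of the first step is in place I expect no further obstacle: both implications are direct from the definitions.
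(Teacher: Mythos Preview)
Your argument is correct and is exactly the unpacking the paper has in mind: the paper offers no proof beyond the word ``Clearly,'' so there is nothing to compare against beyond the implicit dictionary between $\dCay(\Gamma,S)$ and $\Cay(\Gamma,S)$ that you spell out. Your final paragraph is a genuine addition: you are right that the converse, read literally, only produces \emph{some} orientation of $G$ inside a minimal Cayley digraph rather than the preassigned $D$, and this existential reading is precisely how the paper invokes the observation later (e.g.\ just before Lemma~2.6).
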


Denote by $W=(a^{\sigma(1)}_1, \ldots, a^{\sigma(\ell)}_{\ell})$ a walk in $D$, where the sign $\sigma(i)\in \{+1,-1\}$ denotes wether arc $a_i$ is traversed in forward or backward direction respectively. Finally, denote by $\underline{W}$ the subdigraph of $D$ on the set of arcs $\{a_1, \ldots, a_{\ell}\}$.

\begin{defi}\label{def:dilonely}
An arc-coloring $f:A\to S$ of a digraph $D=(V,A)$ is called \emph{faithful no-lonely-color} if \begin{enumerate}
    \item for every $i\in S$ the subdigraph $f^{-1}(i)$ has maximum outdegree and maximum indegree at most $1$
    \item for every cycle $C\subseteq A$ of $D$ and $i\in S$ we have $|C\cap f^{-1}(i)|\neq 1$
    \item for walks $W=(a^{\sigma(1)}_1, \ldots, a^{\sigma(\ell)}_{\ell}), W'=(b^{\tau(1)}_1, \ldots, b^{\tau(\ell)}_{\ell})$ we have $f(a_i)=f(b_i)$ and $\sigma(i)=\tau(i)$ for every $i\in[\ell]$, then $\underline{W}\cong \underline{W'}$.
\end{enumerate}
\end{defi}

\begin{obs}\label{obs:reverse_mono}
Let $f:A\to  S$ be a faithful no-lonely-coloring of $D=(V,A)$ then for any $i\in S$
\begin{enumerate}
    \item  reversing the orientation of $f^{-1}(i)$ yields a digraph $D^i$ for which $f$ also is a faithful no-lonely-coloring.
    \item all cycles in $f^{-1}(i)$ are directed (by Definition~\ref{def:dilonely}.1.) and monochromatic (by Definition~\ref{def:dilonely}.3.).
\end{enumerate}
\end{obs}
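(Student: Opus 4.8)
The plan is to read both items off Definition~\ref{def:dilonely} by a direct check, with essentially all the substance concentrated in the behaviour of item~(3) under reorienting a single colour class.

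For part~(1), fix $i\in S$ and let $D^i$ be obtained from $D=(V,A)$ by reversing every arc in $f^{-1}(i)$; then $f$ is still an arc-colouring $A\to S$ of $D^i$. Item~(1) of Definition~\ref{def:dilonely} for $D^i$ is immediate: for $j\neq i$ the subdigraph $f^{-1}(j)$ is untouched, while for $j=i$ reversing all its arcs swaps the in- and out-degree of every vertex within $f^{-1}(i)$, so both bounds of at most $1$ survive. Item~(2) for $D^i$ is likewise immediate, since reorienting arcs changes neither which subsets of $A$ form a cycle nor the colours they carry, so $|C\cap f^{-1}(j)|=1$ happens in $D^i$ exactly when it happens in $D$.

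The content is item~(3) for $D^i$. I would use the obvious bijection between walks of $D$ and walks of $D^i$: a walk $W=(a_1^{\sigma(1)},\dots,a_\ell^{\sigma(\ell)})$ of $D$ corresponds to the walk $\widehat W=(a_1^{\rho(1)},\dots,a_\ell^{\rho(\ell)})$ of $D^i$ along the same arc sequence, where $\rho(k)=-\sigma(k)$ if $f(a_k)=i$ and $\rho(k)=\sigma(k)$ otherwise, because reversing an arc flips the direction in which it is traversed. This bijection preserves the colour sequence and preserves the relation ``same colour sequence and same sign sequence'': if $\widehat W,\widehat W'$ in $D^i$ match in colours and in signs, so do their preimages $W,W'$ in $D$, since the signs at the colour-$i$ positions have been flipped in both. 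Hence Definition~\ref{def:dilonely}.3 for $D$ gives $\underline W\cong\underline{W'}$, and — taking this isomorphism to respect arc-colours, i.e.\ to match the $k$-th arc of $W$ with the $k$-th arc of $W'$ — it commutes with ``reverse all colour-$i$ arcs'', so applying that operation to both sides yields $\underline{\widehat W}\cong\underline{\widehat{W'}}$, which is Definition~\ref{def:dilonely}.3 for $D^i$. The one point to be careful about, and the place I expect the only real friction, is exactly this last transport: one must know that the isomorphism granted by Definition~\ref{def:dilonely}.3 can be chosen colour-preserving — the natural position-to-position correspondence between two walks with identical colour and sign sequences is the candidate, and checking it is well defined and an isomorphism is the small amount of bookkeeping that remains.

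For part~(2), let $C$ be a cycle lying inside $f^{-1}(i)$. Then every arc of $C$ has colour $i$, so $C$ is monochromatic; and by Definition~\ref{def:dilonely}.1 no vertex can be the tail of two arcs of $C$ nor the head of two arcs of $C$, which forces $C$ to be a directed cycle. Finally, the genuine use of Definition~\ref{def:dilonely}.3 is the uniformity of these cycles: if $C,C'\subseteq f^{-1}(i)$ are directed cycles of lengths $k$ and $k'$, then going $k'$ times around $C$ and going $k$ times around $C'$ are closed walks of the common length $kk'$ with constant colour $i$ and constant sign $+1$, so $C=\underline W\cong\underline{W'}=C'$ and $k=k'$.
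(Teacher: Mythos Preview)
The paper gives no proof of this observation; it is stated bare, with only the parenthetical hints to Definition~\ref{def:dilonely}.1 and~\ref{def:dilonely}.3. Your write-up supplies the argument the paper omits, and for the most part does so correctly.

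For part~(2), your derivation that every cycle in $f^{-1}(i)$ is directed is precisely the intended use of Definition~\ref{def:dilonely}.1, and your additional equal-length argument is exactly what the paper needs downstream to obtain Definition~\ref{def:lonely}.3. Indeed, the paper's phrase ``monochromatic (by Definition~\ref{def:dilonely}.3)'' is almost certainly a slip for this equal-length claim, since a cycle contained in $f^{-1}(i)$ is monochromatic by fiat; the nontrivial consequence of item~3 is what you prove.

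For part~(1), items~1 and~2 of Definition~\ref{def:dilonely} are handled correctly. The friction you flag in item~3 is genuine: Definition~\ref{def:dilonely}.3 promises only an abstract digraph isomorphism $\underline{W}\cong\underline{W'}$, not a colour-preserving one, and transporting the isomorphism across ``reverse all colour-$i$ arcs'' needs exactly that. Your proposed position-wise map $a_k\mapsto b_k$ is the natural candidate, but its well-definedness is not mere bookkeeping: walks may repeat arcs, and showing $a_j=a_k\Rightarrow b_j=b_k$ from Definition~\ref{def:dilonely} alone is not obvious. This gap is equally present in the paper (which argues nothing), and it is harmless for the paper's applications: the only faithful no-lonely-colourings actually used are those of Lemma~\ref{lem:dinolo}, where the isomorphism comes from left-multiplication and is automatically colour-preserving, so your transport argument goes through verbatim in that setting.
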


\begin{lem}\label{lem:dinolo}
If $D=(V,A)$ is subdigraph of a minimal Cayley digraph $\dCay(\Gamma,S)$, then assigning via $f:A\to S$ to each $a\in A$ the corresponding element of $S$ is a faithful no-lonely-color.
\end{lem}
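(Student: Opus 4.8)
My plan is to verify the three conditions of Definition~\ref{def:dilonely} in turn, working directly from the group description of $\dCay(\Gamma,S)$ and invoking minimality of $S$ only for the second. Write $s_a := f(a) = a_1^{-1}a_2 \in S$ for an arc $a = (a_1,a_2)$ of $D$. The elementary fact I will use throughout is that traversing $a$ forwards sends a group element $g$ to $gs_a$ while traversing it backwards sends $g$ to $gs_a^{-1}$; hence a walk starting at $g_0$ and reading off signed colours $s_1^{\sigma(1)}, \dots, s_k^{\sigma(k)}$ ends at $g_0\, s_1^{\sigma(1)}\cdots s_k^{\sigma(k)}$.

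Condition~1 is immediate: two arcs of colour $i$ leaving a common vertex $g$ both end at $gi$, and two arcs of colour $i$ entering a common vertex $g$ both start at $gi^{-1}$, so by cancellation in $\Gamma$ the digraph $f^{-1}(i)$ has out- and in-degree at most $1$. Condition~2 is where minimality is used. I would traverse a cycle $C$ of $D$: since it returns to its starting vertex, the signed colours along it multiply to the identity, giving a relation $\prod_j s_{a_j}^{\sigma(j)} = e$ in $\Gamma$. If some colour $i$ occurred on exactly one arc of $C$, I could solve this relation for that lone occurrence and express $i$ as a word in $S\setminus\{i\}$ and its inverses, so that $\langle S\setminus\{i\}\rangle = \Gamma$, contradicting the inclusion-minimality of $S$. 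Hence $|C\cap f^{-1}(i)| \ne 1$.

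For Condition~3, I would compare the vertex sequences $g_0,\dots,g_\ell$ of $W$ and $g_0',\dots,g_\ell'$ of $W'$. Because $W$ and $W'$ have the same colour sequence and the same signs, the partial products $p_k := s_1^{\sigma(1)}\cdots s_k^{\sigma(k)}$ are the same for both, and $g_k = g_0 p_k$, $g_k' = g_0' p_k$. Consequently $g_k = g_m \iff p_k = p_m \iff g_k' = g_m'$ for all $k,m$, and likewise the arcs $a_k, a_m$ coincide iff $(p_{k-1},p_k) = (p_{m-1},p_m)$ iff $b_k, b_m$ coincide. Therefore the basepoint-shift $g_k \mapsto g_k'$ is a well-defined bijection $V(\underline W) \to V(\underline{W'})$ that carries each arc $a_k$ to the arc $b_k$ with matching orientation, i.e.\ a digraph isomorphism $\underline W \cong \underline{W'}$.

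The conceptual content here is light. Conditions~1 and~2 become one-line arguments once the translation into $\Gamma$ is in place, with minimality entering only to rule out a ``lonely'' colour on a cycle. The step that needs the most care is Condition~3, and the key point to pin down is that which vertices — and which arcs — of a walk coincide is determined by the colour-and-sign word alone, independently of the starting vertex; this invariance is exactly what makes the shift map an isomorphism. I expect this bookkeeping, rather than any substantive difficulty, to be the main obstacle.
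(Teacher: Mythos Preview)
Your proof is correct and follows the same line as the paper's. For Condition~1 the arguments are identical. For Condition~2 the paper simply cites Babai~\cite{B78}, whereas you spell out the underlying reason (a lone occurrence of $i$ in the cycle relation $\prod s_{a_j}^{\sigma(j)}=e$ lets one write $i\in\langle S\setminus\{i\}\rangle$); this is precisely Babai's argument, so there is no real divergence. For Condition~3 the paper invokes the color-preserving automorphism of $\dCay(\Gamma,S)$ given by left multiplication with $g_0' g_0^{-1}$, while you rebuild this map by hand via the partial products $p_k$: your bijection $g_k\mapsto g_k'$ is exactly $x\mapsto g_0' g_0^{-1}x$ restricted to $V(\underline W)$, so the two arguments coincide. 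One small wording point: your claim ``$a_k=a_m$ iff $(p_{k-1},p_k)=(p_{m-1},p_m)$'' is only literally true when $\sigma(k)=\sigma(m)$; in the mixed-sign case the pair is reversed. This does not affect the conclusion, since whatever the condition on the $p$'s is, it is the same for $W$ and $W'$ --- or, more cleanly, because your vertex map is the restriction of a global bijection of $\Gamma$ and arcs are ordered pairs of vertices.
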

\begin{proof}
We show the properties one by one:
\noindent 1. For every $i\in S$ the subdigraph $f^{-1}(i)$ consists of the edges corresponding to one element of $c\in S$. Hence, every vertex $g$ can have at most one outneighbor $vc$ and one in-neighbor $vc^{-1}$.

\noindent 2. By Observation~\ref{obs:diundi} the underlying graph of $D$ is subgraph of a minimal Cayley graph. By Babai~\cite{B78} the coloring $f$  satisfies~2.

\noindent 3. Consider for walks $W=(a^{\sigma(1)}_1, \ldots, a^{\sigma(\ell)}_{\ell}), W'=(b^{\tau(1)}_1, \ldots, b^{\tau(\ell)}_{\ell})$ we have $f(a_i)=f(b_i)$ and $\sigma(i)=\tau(i)$ for every $i\in[\ell]$. Now let $g$ and $h$ be the starting vertices of $W$ and $W'$, respectively. It is well-known and easy to see that in $\dCay(\Gamma,S)$ the left-multiplication with $hg^{-1}$ is a digraph automorphism that preserves arc-colors and in particular maps $g$ to $h$, see e.g.~\cite[Chapter 7.3]{KK19}. Since $f(a_i)=f(b_i)$ and $\sigma(i)=\tau(i)$ for every $i\in[\ell]$, this automorphism induces an isomorphism from $\underline{W}$ to $\underline{W'}$.
\end{proof}

It follows directly that
\begin{obs}\label{obs:matching}
Let $D=(V,A)$ be a subgraph of a minimal Cayley digraph $\dCay(\Gamma, S)$ and $f:A\to S$ the faithful no-lonely-color coloring coming from Lemma~\ref{lem:dinolo}. If for some $i\in S$ there is a cycle of length $2$ in $f^{-1}(i)$, then each component of $f^{-1}(i)$ has at most two vertices and augmenting it to a cycle of length $2$, produces a $D'$, such that $D\subseteq D'\subseteq \dCay(\Gamma, S)$ with the correspondingly extended $f$.
\end{obs}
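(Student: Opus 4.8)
The plan is to exploit that a directed $2$-cycle inside a color class forces the corresponding generator to be an involution, which then completely determines the structure of that color class in the whole Cayley digraph.

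First I would pass to group language. Write $c\in S$ for the element such that $f^{-1}(i)$ consists of the arcs corresponding to $c$; thus an arc $(a,b)$ gets color $i$ exactly when $b=ac$. A directed $2$-cycle in $f^{-1}(i)$ gives vertices $g,h$ with $h=gc$ and $g=hc$, whence $g=gc^{2}$ and so $c^{2}=e$; since $S$ is a minimal generating set (and hence contains no identity) $c$ is an involution.

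Second, I would bound the components of $f^{-1}(i)$. In $\dCay(\Gamma,S)$ the only vertex joined to a fixed $g$ by a color-$i$ arc, in either direction, is $gc$: an out-arc at $g$ ends at $gc$, while an in-arc at $g$ starts at $gc^{-1}=gc$. Hence every color-$i$ component of $\dCay(\Gamma,S)$, and a fortiori of the subdigraph $D$, is contained in a pair $\{g,gc\}$ and so has at most two vertices; such a pair carries at most the two arcs $(g,gc)$ and $(gc,g)$, both of which actually lie in $\dCay(\Gamma,S)$ because $c$ is an involution.

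Finally, the augmentation is immediate: for each two-vertex component $\{g,gc\}$ of $f^{-1}(i)$ that misses one of $(g,gc),(gc,g)$, add that arc with color $i$. Both arcs belong to $\dCay(\Gamma,S)$, so the resulting $D'$ satisfies $D\subseteq D'\subseteq\dCay(\Gamma,S)$, and the extended coloring is just the restriction to $D'$ of the canonical coloring of $\dCay(\Gamma,S)$, hence faithful no-lonely-color by Lemma~\ref{lem:dinolo}. The only point needing a little care is the component argument — checking that a color-$i$ component cannot grow past a single pair — but once $c^{2}=e$ is established this is a one-line verification, so I do not anticipate a genuine obstacle.
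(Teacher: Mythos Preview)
Your argument is correct and is precisely the direct verification the paper has in mind: the text introduces this observation with ``It follows directly that'' and gives no further proof, and your three steps (a $2$-cycle forces $c^{2}=e$; hence in $\dCay(\Gamma,S)$ the colour-$i$ neighbourhood of any $g$ is the single vertex $gc$; hence every component sits inside a pair $\{g,gc\}$ whose two arcs already lie in the Cayley digraph) constitute exactly that direct verification.
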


We turn our attention to undirected graphs.

\begin{defi}\label{def:lonely}
An edge-coloring $f:E\to S$ of a graph $G=(V,E)$ is called \emph{faithful no-lonely-color} if \begin{enumerate}
    \item for every $i\in S$ the subgraph $f^{-1}(i)$ has maximum degree $2$,
    \item for every cycle $C\subseteq E$ of $G$ and $i\in S$ we have $|C\cap f^{-1}(i)|\neq 1$,
    \item for every $i\in S$ all cycles in the subgraph $f^{-1}(i)$ are of the same length.
\end{enumerate}
\end{defi}

If $G$ is subgraph of a minimal Cayley graph, then by Observation~\ref{obs:diundi} $G$ is the underlying graph of $D$ a subdigraph of a minimal Cayley digraph. Now, we can apply Lemma~\ref{lem:dinolo} to $D$. By Observation~\ref{obs:reverse_mono}.2 we get:

\begin{lem}
If $G=(V,E)$ is subgraph of a minimal Cayley graph $\Cay(\Gamma,S)$, then assigning via $f:E\to S$ to each $e\in E$ the corresponding element of $S$ yields a faithful no-lonely-color.    
\end{lem}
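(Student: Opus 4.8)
The plan is to reduce the undirected statement to its directed counterpart, Lemma~\ref{lem:dinolo}, which already carries the weight of the argument. First I would apply Observation~\ref{obs:diundi}: since $G$ is a subgraph of the minimal Cayley graph $\Cay(\Gamma,S)$, it is the underlying graph of some digraph $D=(V,A)$ that is a subdigraph of the minimal Cayley digraph $\dCay(\Gamma,S)$. By Lemma~\ref{lem:dinolo}, the arc-coloring $f\colon A\to S$ that sends each arc to its defining element of $S$ is a faithful no-lonely-color coloring of $D$, and the edge-coloring of $G$ in the statement is precisely $f$ read on the underlying graph. So it remains to check the three conditions of Definition~\ref{def:lonely} against the three of Definition~\ref{def:dilonely}.

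Condition~1 is immediate: for each $i\in S$ the degree of a vertex in the underlying graph of $f^{-1}(i)$ is at most its outdegree plus its indegree in $D$, hence at most $1+1=2$ by Definition~\ref{def:dilonely}.1. Condition~2 transfers verbatim: a cycle $C\subseteq E$ of $G$ is, as an arc set, a cycle of $D$ in the sense of Definition~\ref{def:dilonely}.2 (which does not require the cycle to be directed), so $|C\cap f^{-1}(i)|\neq 1$ is inherited unchanged.

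The one condition that requires a genuine argument is Condition~3, that all cycles of $f^{-1}(i)$ have a common length; this is exactly Observation~\ref{obs:reverse_mono}.2, which I would cite. For the record, the reason is: by Definition~\ref{def:dilonely}.1 every component of $f^{-1}(i)$ is a directed path or a directed cycle, so every cycle in $f^{-1}(i)$ is directed; and if two such cycles had different lengths $p<q$, then walking $p$ consecutive arcs around each gives two walks with identical color- and sign-sequences whose underlying subdigraphs --- a $p$-vertex directed cycle versus a directed path on $p+1$ vertices --- are non-isomorphic, contradicting Definition~\ref{def:dilonely}.3. I expect this last point, extracting uniform cycle length from faithfulness, to be the only non-mechanical step; the rest is a routine translation from arcs to edges.
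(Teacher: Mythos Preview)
Your proof is correct and follows exactly the route the paper takes: reduce to the directed setting via Observation~\ref{obs:diundi}, invoke Lemma~\ref{lem:dinolo}, and then appeal to Observation~\ref{obs:reverse_mono}.2 for the cycle-length condition. Your write-up is in fact more careful than the paper's one-line justification --- in particular you spell out why Definition~\ref{def:dilonely}.3 forces all monochromatic cycles to have the same length, a step the paper leaves implicit in its statement of Observation~\ref{obs:reverse_mono}.2.
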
 

A useful reformulation of Definition~\ref{def:lonely}.2 is the following:

\begin{obs}\label{obs:cut}
Let $f:E\to  S$ be an edge-coloring of $G=(V,E)$. Then $f$ satisfies Definition~\ref{def:lonely}.2 if and only if for any $i\in  S$ and $e=\{u,v\}\in f^{-1}(i)$ the vertices $u,v$ lie in different connected components of $G\setminus f^{-1}(i)$.
\end{obs}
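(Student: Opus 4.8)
The plan is to prove the stated equivalence by two direct arguments, each phrased as a contrapositive, exploiting the simple dictionary between ``a cycle using exactly one edge of color $i$'' and ``a path joining the endpoints of a color-$i$ edge that avoids all color-$i$ edges.''

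First I would treat the implication ``component condition $\Rightarrow$ Definition~\ref{def:lonely}.2'' by contraposition. Assume some cycle $C$ of $G$ and some $i\in S$ satisfy $|C\cap f^{-1}(i)|=1$, and let $e=\{u,v\}$ be the unique edge of $C$ lying in $f^{-1}(i)$. Then $C$ with $e$ removed is a $u$--$v$ path all of whose edges have color different from $i$, hence a $u$--$v$ path inside $G\setminus f^{-1}(i)$. Consequently $u$ and $v$ lie in the same connected component of $G\setminus f^{-1}(i)$, which contradicts the component condition.

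For the converse I would again argue contrapositively: suppose for some $i\in S$ there is an edge $e=\{u,v\}\in f^{-1}(i)$ with $u$ and $v$ in the same connected component of $G\setminus f^{-1}(i)$. Choose a $u$--$v$ path $P$ in $G\setminus f^{-1}(i)$. Every edge of $P$ has color different from $i$, so in particular $e\notin P$; since $P$ is a path whose two ends are exactly $u$ and $v$, adding $e$ closes it into a cycle $C$ of $G$. By construction $e$ is the only edge of $C$ with color $i$, so $|C\cap f^{-1}(i)|=1$, violating Definition~\ref{def:lonely}.2.

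The only step that calls for any care is turning a connecting subgraph into an honest cycle, and this is sidestepped by taking $P$ to be a path from the start, so $P$ together with $e$ is automatically a cycle; I therefore do not anticipate any real obstacle.
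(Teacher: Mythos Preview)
Your argument is correct and is exactly the natural proof of this equivalence. The paper itself gives no proof at all---it records the statement as a bare observation---so there is nothing to compare; your two contrapositive steps are the standard justification and would serve perfectly well if a proof were wanted.
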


\section{An algorithm}
The implementation of the following algorithms, is available in our GitHub repository~\cite{code_repo}.

First, we devise an algorithm that given a graph $G=(V,E)$  produces all faithful no-lonely-color colorings. We extend a partial coloring $f$. We assume the set $C=\{1, \ldots, k\}$ for some $k\in \mathbb{N}$. Initially however we set, $f(e)=0$ for all $e\in E$, which stands for $e$ being uncolored.


\begin{algorithm}[H]
    \caption{\textsc{extendColoring}: Extends a partial faithful no-lonely-color coloring}
    \label{alg:all_no_lonely_colorings}
    \SetAlgoLined

    \textbf{Input:}\\
    \quad $c$: the color currently being extended.\\
    \quad $f\_valid$: \texttt{True} if $f^{-1}(c)$ satisfies Definition~\ref{def:lonely}, \texttt{False} otherwise.\\

    \BlankLine

    \If{all edges are colored \textbf{and} $f\_valid$}{
        \textbf{Output} current coloring $f$\\
        \Return \texttt{True}
    }

    \BlankLine

    \tcp{Determine candidate edges to color next}
    \uIf{there exists a cycle with a unique uncolored edge $e$}{
        $next\_edges \gets \{e\}$
    }
    \Else{
        \uIf{$f\_valid$}{
            \tcp{Start coloring with a new color $c+1$}
            Choose any uncolored edge $e$\\
            $f(e) \gets c+1$\\
            \If{\textsc{extendColoring}($c+1$, \texttt{False})}{
                \Return \texttt{True}
            }
            \tcp{Try other uncolored edges with current color}
            $next\_edges \gets$ uncolored edges of $G$
        }
        \Else{
            \tcp{Fix a cycle with exactly one edge of color $c$}
            Find a cycle $C$ with a unique edge of color $c$\\
            \uIf{such a cycle $C$ exists}{
                $next\_edges \gets$ uncolored edges of $C$
            }
            \Else{
                $next\_edges \gets$ uncolored edges of $G$
            }
        }
    }

    \BlankLine

    \ForEach{$e \in next\_edges$}{
        $f(e) \gets c$\\

        \If{$\Delta(f^{-1}(c)) \leq 2$ \textbf{and} all cycles in $f^{-1}(c)$ have the same length}{
            \BlankLine
            \tcp{Check Observation~\ref{obs:cut}}
            \If{$u', v'$ are in different components of $G \setminus f^{-1}(c)$ for all $e' \in f^{-1}(c)$}{
                \If{\textsc{extendColoring}($c$, \texttt{True})}{
                    \Return \texttt{True}
                }
            }
            \If{\textsc{extendColoring}($c$, \texttt{False})}{
                \Return \texttt{True}
            }
        }
        \Else{
            $f(e) \gets 0$ \tcp*{Reset edge if invalid}
        }
    }

    \BlankLine

    \Return \texttt{False}
\end{algorithm}

After having generated all faithful no-lonely-color colorings of $G$ we reduce the list by removing colorings that can be obtained from others by applying some automorphism of $G$ or by relabeling colors. We then proceed to find orientations of $G$ that satisfy Lemma~\ref{lem:dinolo} and in particular Observation~\ref{obs:matching}.

For the following assume we are given $G=(V,E)$ together with a no-lonely-color coloring $f:E\to  S$. We devise an algorithm to construct an orientation $D$ of $G$ color by color to obtain a candidate orientation. It is based on the following easy observations. For this, we note that if $H$ is connected and \emph{non-trivial}, i.e., with more than one vertex of maximum degree $\leq 2$. Then $H$ has exactly two orientations satisfying Definition~\ref{def:dilonely}.1, denoted by $H^+$ and $H^-$. By Observation~\ref{obs:reverse_mono} in each color, one of these orientations can be arbitrarily fixed. Further, note that by Observation~\ref{obs:matching}, only if some color $f^{-1}(i)$ is a matching, then its edges can potentially be completely bi-oriented. 
We initialize $D=G$ unoriented and $c=1$.

\begin{algorithm}[H]
\caption{\textsc{extendCandidateOrientation}: Extends candidate orientations of a graph with no-lonely-color $k$-coloring.}
\label{alg:obtainAllPossibleOrientations}
\textbf{Input:} An integer $c$ indicating the next color class to orient.\\

\If{$c = k+1$}{
    \textbf{output} $D$
}
\Else{
    fix an arbitrary non-trivial connected component  $H'$ of $f^{-1}(c)$\\
    \For{$H'\in S \subseteq \{ H \mid H \text{ is a non-trivial connected component of } f^{-1}(c) \}$}{
        Orient each $H \in S$ as $H^+$ in $D$\\
        Orient each $H \notin S$ as $H^-$ in $D$\\
        \textsc{extendCandidateOrientation}$(c+1)$
    }
    \If{$f^{-1}(c)$ is a matching}{
        Bi-orient all edges in $f^{-1}(c)$ in $D$\\
        \textsc{extendCandidateOrientation}$(c+1)$
    }
}
\end{algorithm}

For each of the generated candidate orientations we use the following algorithm that checks the special case of Definition~\ref{def:dilonely}.3, where only cycles are considered.

\begin{algorithm}[H]\label{algo:cycles}
    \caption{\textsc{checkCycles}: Verify if the color sequence of every cycle, when followed from any vertex, yields a cycle} 
    \label{alg:checkCycles}
    \textbf{Input:}\\
    \quad $D=(V,A), f$: A colored directed graph, where $f$ assigns a color to each arc.\\
    \textbf{Output:} \\
    \quad \texttt{True} if, for every cycle $C$ in $D$, the color/direction pattern of $C$ starting at any vertex yields a cycle; \texttt{False} otherwise.
    \vspace{0.3cm}
    
    \For{every cycle $C$ in $D$}{
        Represent $C$ as a sequence of arcs with directions: \\
        \quad $C = (a^{\sigma(1)}_1, \ldots, a^{\sigma(\ell)}_\ell)$ \\
        where each $a_i$ is an arc and $\sigma(i) \in \{+1, -1\}$ indicates direction.
        \BlankLine
        Let $\mathbf{c} = (f(a_1), \ldots, f(a_\ell))$ be the color sequence. \\
        Let $\boldsymbol{\sigma} = (\sigma(1), \ldots, \sigma(\ell))$ be the direction sequence.
        \BlankLine
        \For{every vertex $v$ in $V$}{
            Try to follow a path from $v$ using color sequence $\mathbf{c}$ and direction sequence $\boldsymbol{\sigma}$. \\
            \If{a path is found, but it does not return to $v$ or self-intersects}{
                \Return \texttt{False}
            }
        }
    }
    \Return \texttt{True}
\end{algorithm}
\section{Two cubic graphs of girth 5}
There are two cubic graphs of girth 5. Our implementation~\cite{code_repo} applied to the \emph{triplex graph}, see the left of Figure~\ref{fig:triplex}, shows that it is not the subgraph of a minimal Cayely graph. Indeed, Algorithm \ref{alg:all_no_lonely_colorings} produces all 216 faithful no-lonely color colorings associated to this graph, in 11 seconds. After reducing modulo automorphism, we are only left with 15 colorings. Algorithm~\ref{alg:obtainAllPossibleOrientations} and Algorithm~\ref{algo:cycles} finish without finding any orientation to any of these colorings in about 45 seconds.

\begin{figure}
    \centering
    \includegraphics[width=\linewidth]{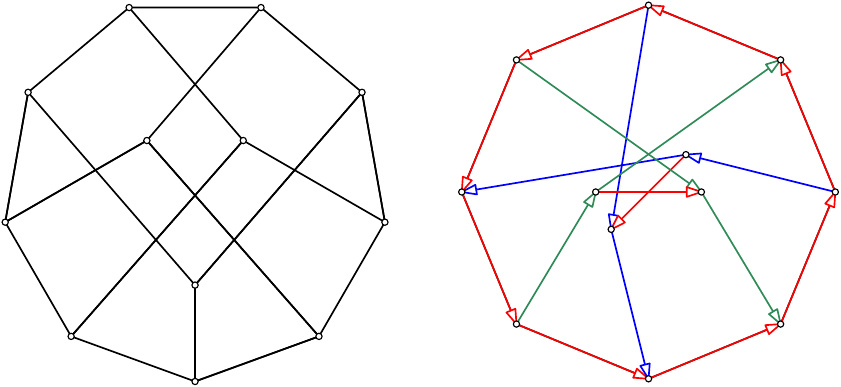}
    \caption{Left: The triplex graph is not subgraph of a minimal Cayley graph. Right: An orientation of the twinplex graph with a coloring from our algorithm.}
    \label{fig:triplex}
\end{figure}

The second cubic graphs of girth 5 is the \emph{twinplex graph} depicted on the right of Figure~\ref{fig:triplex}. In this case, our procedure finds an orientation and a coloring that passes our algorithm, as depicted in the figure. Hence, we cannot conclude whether the graph is a subgraph of a minimal Cayley graph or not.

\section{Generalized Petersen Graphs}

Let $k,n$ be integers such that $0<k<\frac{n}{2}$. The \emph{generalized Petersen graph} $G(n,k)$ is the cubic graph on vertex set $V = V_I \cup V_O$, where $V_I = \{v_0, \ldots, v_{n-1}\}$ is the set of {\it inner vertices} and $V_O = \{u_0,\ldots, u_{n-1}\}$ the set of {\it outer vertices}. The edge set is partitioned into three parts (all subscripts are considered modulo $n$): the edges $E_O(n,k) = \{u_i u_{i+1} \, \vert \, 0 \leq i \leq n-1\}$ inducing a cycle of length $n$; the edges $E_I(n,k) = \{v_i v_{i+k} \, \vert \, 0 \leq i \leq n-1\}$, inducing $\gcd(n,k)$ cycles of length $\frac{n}{\gcd(n,k)}$; and the edges $E_S(n,k) = \{u_i v_i \, \vert \, 0 \leq i \leq n-1\}$ forming a perfect matching of $G(n,k)$. Generalized Petersen graphs were introduced by Coxeter~\cite{Cox-50} and named by Watkins~\cite{Watkins:69}. 
Many algebraic properties of $G(n,k)$ depend on the particular $k,n$, e.g., isomorphisms~\cite{Ste-09}, automorphism groups, edge-and vertex-transitivity~\cite{FGW:71}, being a core~\cite{GMK24}, and being a Cayley graph~\cite{Ned-95,Lov-97}.

\begin{figure}[h]
\centering
\includegraphics[width=\textwidth]{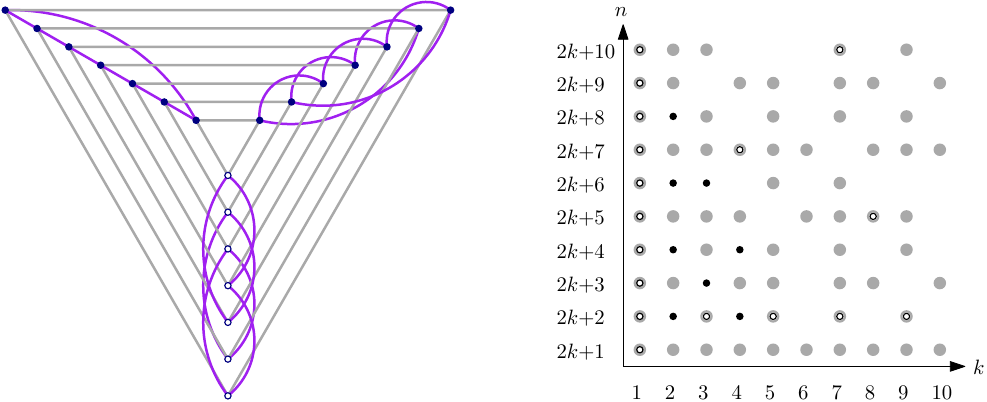}
\caption{Left: the filled vertices  induce $G(7,2)$ in $\Cay(C_7\rtimes C_3,\{a,b\})$. Right: The Petersen plane, where white dots are Cayley graphs, Grey dots are induced subgraph of a minimal Cayley graph of $C_n\rtimes C_r$ by Theorem~\ref{thm:Petersen}, and black dots have been obtained by computer. }\label{fig:G72inC3C7}
\end{figure}

\begin{thm}\label{thm:Petersen}
Let $k,r,n\in\mathbb{N}$ such that $\gcd(k,n)=1$ and  $k^r=1\mod n$. Then $G(n,k)$ is an induced subgraph of a minimal Cayley graph of $C_n\rtimes C_r$.
\end{thm}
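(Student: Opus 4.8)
The plan is to realize $G(n,k)$ explicitly inside a Cayley graph of the semidirect product $\Gamma = C_n \rtimes C_r$, where $C_r = \langle b \rangle$ acts on $C_n = \langle a \rangle$ by $b^{-1}ab = a^k$ (this is well-defined precisely because $k^r \equiv 1 \pmod n$). Concretely, I would look for a connection set of the form $S = \{a, b\}$ (or possibly $S=\{a,b,c\}$ with one extra element, as the picture for $G(7,2)$ suggests two generators suffice). The outer $n$-cycle $E_O$ should be the orbit of the subgroup $\langle a \rangle$, i.e. the vertices $u_i = a^i$ joined by the generator $a$; the spokes $E_S = \{u_i v_i\}$ should correspond to multiplication by $b$, so $v_i = a^i b$; and then the inner edges $v_i v_{i+k}$ must come for free from the group relation: $(a^i b)^{-1}(a^{i+k} b) = b^{-1} a^{k} b \cdot (\text{something})$ — one computes $(a^i b)^{-1} a^{i+k} b = b^{-1} a^{-i} a^{i+k} b = b^{-1} a^{k} b$, and we want this to be a generator. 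Under the action $b^{-1}a^k b = a^{k^2}$, which is not $a$ in general, so a naive $S=\{a,b\}$ gives inner edges $v_i v_j$ with $j - i \equiv k^{-1}\cdot(\text{shift})$; the honest route is to instead set $v_i = a^{i} b$ and check that the generator $b$ read from a spoke vertex traverses $v_i \mapsto v_i b = a^i b^2$, so one needs $r$ and the indexing chosen so that the $b$-edges among the $v$'s realize exactly the jump-by-$k$ pattern. I would therefore pin down the vertex labels by the assignment $u_i = a^i$, $v_i = a^{i}b$ and verify the three edge classes directly: (i) $u_i \sim u_{i+1}$ via $a$; (ii) $u_i \sim v_i$ via $b$; (iii) $v_i \sim v_{i+k}$ — here $(a^ib)^{-1}(a^{i+k}b)=b^{-1}a^kb=a^{k^2 \bmod n}$, so this is an edge iff $a^{k^2}\in S$, which forces adding it; better, reverse the roles and note $(a^{i+k}b)^{-1}(a^ib)=b^{-1}a^{-k}b=a^{-k^2}$. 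To avoid this mismatch, the cleaner choice is $v_i = a^{j(i)}b$ where $j$ is chosen so that consecutive inner-cycle vertices differ by $a$ in the group; since the inner edges form $\gcd(n,k)=1$ cycle of length $n$ visiting $v_0, v_k, v_{2k}, \dots$, relabel the inner cycle along this Hamiltonian order and set $v_{mk} = a^{m}b$. Then $v_{mk}\sim v_{(m+1)k}$ is $(a^mb)^{-1}(a^{m+1}b)=b^{-1}ab=a^k\in S$ — consistent! — and the spoke $u_{mk}\sim v_{mk}$ becomes $(a^{mk})^{-1}(a^m b)=a^{m-mk}b=a^{m(1-k)}b$, which is an edge iff $a^{m(1-k)}b\in S$; since this must hold for all $m$ and $\gcd(1-k,n)$ need not be $1$, this again fails for a two-element $S$ unless we are more careful.

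The resolution, and the heart of the construction, is to choose the correspondence between the index $i$ of the generalized Petersen graph and the group element so that \emph{both} the spoke generator and the inner-cycle generator come out constant. Set $u_i = a^i$ and $v_i = a^{ci}b$ for a constant $c$ to be determined. Then: spokes $u_i\sim v_i$ read $a^{-i}a^{ci}b = a^{(c-1)i}b$, constant in $i$ iff $c=1$; inner edges $v_i\sim v_{i+k}$ read $(a^{ci}b)^{-1}a^{c(i+k)}b = b^{-1}a^{ck}b = a^{ck^2}$, constant — good for any $c$. With $c=1$ the spokes are all the element $b$, the outer cycle uses $a$, and the inner edges all use $a^{k^2 \bmod n}$; so take $S = \{a, b, a^{k^2}\}$ when $k^2 \not\equiv \pm1$, and $S=\{a,b\}$ otherwise. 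I would then show: (1) this $S$ generates $\Gamma$ (it contains $a$ and $b$, which generate by construction); (2) the induced subgraph on $\{a^i, a^ib : 0\le i<n\}$ is exactly $G(n,k)$ — the three edge types are present by the computation above, and one must check \emph{no other} edges appear between these $2n$ vertices, which amounts to checking that $a^{\pm 1}, b^{\pm1}, a^{\pm k^2}$ don't accidentally connect, say, $a^i$ to $a^j b$ for a non-spoke pair, a short finite case analysis using that the $C_n$-coset and the $C_n b$-coset are only linked by $b^{\pm1}$; (3) $S$ is \emph{inclusion-minimal}: removing $a$ disconnects the $\langle a\rangle$-coset structure (the quotient by $\langle b\rangle$-orbits), removing $b$ leaves everything inside $\langle a, a^{k^2}\rangle = \langle a\rangle \ne \Gamma$, and removing $a^{k^2}$ still leaves $\langle a,b\rangle=\Gamma$ — wait, that last one shows $a^{k^2}$ is redundant, so in fact $S=\{a,b\}$ is the minimal generating set and $G(n,k)$ sits inside $\Cay(\Gamma,\{a,b\})$ only as a subgraph, not induced, unless we keep $a^{k^2}$ but then minimality fails. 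The genuine fix is to pick $S$ minimal containing suitable elements: take $S=\{a_0, b\}$ where $a_0$ is a generator of $C_n$ and note that among powers of $a$, minimality forbids two of them; so the construction must use $b$ to generate the $a$-direction, e.g. via $a = (b a^t)$-type products — this is exactly where the hypothesis $\gcd(k,n)=1$ and the order condition $k^r\equiv 1$ must be exploited to arrange a \emph{two-element} minimal generating set realizing all three edge classes.

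The main obstacle, then, is reconciling minimality with the need for three "colors" of edges in a cubic graph: a minimal generating set of an abelian-by-cyclic group like $C_n\rtimes C_r$ typically has size $2$, but a cubic graph wants $3$ edge-colors unless one color class carries a matching (bi-oriented) — see Observation~\ref{obs:matching}. So the real construction presumably uses $S=\{a,b\}$ with the inner edges $v_i v_{i+k}$ \emph{also} colored by $b$ or $a$ via a directed reading: because $C_r$ acts and $b$ has order $r$, the $b$-colored subdigraph is a union of $r$-cycles, and if $r = 2$ or the spokes and inner edges can share the generator $b$ traversed in a consistent directed pattern, the cubic graph is covered by just two connection-set elements. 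I would make this precise by: choosing $v_i = a^{i k^{-1}} b$ (using $\gcd(k,n)=1$ so $k^{-1}$ exists mod $n$), verifying the outer cycle is $a$-colored, the inner cycle is $a$-colored too (both live among cosets, differing by powers of $a$), and the spokes are $b$-colored — now $S=\{a,b\}$, which is minimal because neither element is a power of the other and removing either kills the group ($\langle a\rangle = C_n\ne\Gamma$, $\langle b\rangle=C_r\ne\Gamma$). Then the induced-subgraph claim and the minimality claim both go through, with the order hypothesis $k^r\equiv 1 \pmod n$ ensuring $C_r$ really does act on $C_n$ with $b$-conjugation equal to the $k$-power map, which is what forces the inner $k$-jumps to close up into the right cycles. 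The finite check that the induced subgraph has \emph{no extra} edges is the last routine-but-careful step: one verifies that for $x,y$ among the $2n$ chosen elements, $x^{-1}y\in\{a^{\pm1},b^{\pm1}\}$ happens exactly for the $3n$ intended edges, using that $x^{-1}y$ lies in $C_n$ when $x,y$ are in the same $C_n$-coset and in $C_n b^{\pm1}$ otherwise.
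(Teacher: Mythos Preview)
Your overall strategy --- the group $C_n\rtimes C_r$ with a two-element generating set, outer vertices in one $C_n$-coset, inner vertices in an adjacent one --- is exactly the paper's, but a sign slip in the semidirect-product relation derails every explicit map you try. With your convention $b^{-1}ab=a^k$, conjugation by $b$ raises to the power $k^{-1}$, not $k$; so with $v_i=a^ib$ you get $(a^ib)^{-1}(a^{i+k}b)=b^{-1}a^kb=a^{k^2}$, which is not a generator. The one-line fix is to set $v_i=a^ib^{-1}$ (equivalently, write the relation as $bab^{-1}=a^k$): then $(a^ib^{-1})^{-1}(a^{i+k}b^{-1})=ba^kb^{-1}=a$, and all three edge-types of $G(n,k)$ are realised by $S=\{a,b\}$. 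No third generator is needed, so your detour through $S=\{a,b,a^{k^2}\}$ and the ensuing worry about minimality evaporate. Your final attempt $v_i=a^{ik^{-1}}b$ does not work either: the spoke $u_i\sim v_i$ then reads $a^{i(k^{-1}-1)}b$, which varies with $i$ unless $k=1$.

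The paper carries out precisely the corrected construction (with the letters $a,b$ interchanged): presentation $\langle a,b\mid a^r=b^n=e,\ aba^{-1}=b^k\rangle$, map $u_i\mapsto b^i$, $v_i\mapsto b^ia$, connection set $\{a,b\}$. It then performs the ``induced'' verification you postpone: for each pair $x,y$ among the $2n$ chosen group elements one checks that $x^{-1}y\in\{a^{\pm1},b^{\pm1}\}$ holds only for the $3n$ intended edges, using that powers of $b$ and elements of the form $b^ma^{\pm1}$ lie in distinct cosets of $C_n$. This is short but not optional and you should write it out. Minimality of $\{a,b\}$ is immediate since $\langle a\rangle$ and $\langle b\rangle$ are proper subgroups.
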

\begin{proof}
 It is known that since $\gcd(k,n)=1$ and $k^r=1\mod n$ the semidirect product $C_n\rtimes C_r$ has the (minimal) presentation: $$<a,b\mid a^r=b^n=e, aba^{-1}=b^k>.$$
 We show that $G(n,k)$ is isomorphic to an induced subgraph of $\Cay(C_n\rtimes C_r,\{a,b\})$. Indeed, for $0\leq i\leq n-1$ map every outer vertex $u_i$ to $b^i$ and every inner vertex $v_i$ to $b^ia$. See the left side of  Figure~\ref{fig:G72inC3C7} for an example. Clearly, by minimality of the generating set, the outer vertices induce a cycle of length $n$ in $\Cay(C_n\rtimes C_r,\{a,b\})$. Further, there is an edge between $u_i$ and $v_j$ if and only if $b^{-i}b^ja\in\{a,b\}$ or $a^{-1}b^{-j}b^{i}\in\{a,b\}$. This is the case if and only if $i=j$. Now, consider inner vertices $v^iv^j$ these have an edge if $a^{-1}b^{-i}b^ja\in\{a,b\}$ or $a^{-1}b^{-j}b^{i}a\in\{a,b\}$. Using $aba^{-1}=b^k$ one gets that this is the case if and only if $i-j=k\mod n$, which finishes the proof.
\end{proof}

Note that the characterization of generalized Petersen graphs, that are minimal Cayley graphs~\cite{Ned-95,Lov-97} yields a subset of the graphs of Theorem~\ref{thm:Petersen}.
Apart from the above result, we further checked for some small parameters if a given generalized Petersen graph is induced subgraph of a minimal Cayley graph of a group. For this we used the lists of such graphs computed independently by Kolja Knauer~\cite{CDG23}
and Rhys Evans~\cite{Rhys}. We display the results in the right side of Figure~\ref{fig:G72inC3C7}.


\section{Questions}

Our work leaves two principal questions open:

\begin{conj}
For every integer $g\geq 3$ there is a connected cubic graph of girth $g$ that is not a subgraph of a minimal Cayley graph.
\end{conj}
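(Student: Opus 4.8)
Toward this conjecture, the plan would be to reduce to Spencer's theorem~\cite{Spe83} for large girth and dispatch the small cases separately. For $g=3$ the graph $K_4$ works: it is connected and cubic with girth $3$, and it contains $K_4-e$, which by Babai~\cite{B78} is not a subgraph of any minimal Cayley graph; hence neither is $K_4$. For $g=5$ the triplex graph (Figure~\ref{fig:triplex}) does the job. For $g=4$ one would either run Algorithms~\ref{alg:all_no_lonely_colorings}--\ref{alg:checkCycles} on the (few) small cubic graphs of girth $4$, hoping for an analogue of the triplex phenomenon, or try a hand argument in the spirit of Babai's $K_4-e$ argument: in a faithful no-lonely-color coloring every $4$-cycle is monochromatic or uses exactly two colors, each twice, and a monochromatic color class containing a short cycle is severely constrained by Definition~\ref{def:lonely} and Observation~\ref{obs:matching}; a cubic girth-$4$ graph whose $4$-cycles overlap so that these constraints cannot be met simultaneously would settle $g=4$ without a computer.

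For $g\geq 6$ the plan is a degree-reduction of Spencer's construction. Spencer~\cite{Spe83} gives, for every $g$, a graph $H$ of girth $g$ and maximum degree at most $100$ that is not a subgraph of any minimal Cayley graph. One would replace each vertex $x$ of $H$ of degree $d\geq 4$ by a connected cubic ``hub gadget'' $\Theta_d$ of girth at least $g$, attaching the $d$ edges of $H$ at $x$ to $d$ pairwise distinct degree-$2$ vertices of $\Theta_d$, and then subdivide the original edges of $H$, keeping the graph cubic (e.g.\ by replacing path interiors with small cubic beads), by enough to bring the girth of the resulting cubic graph $G$ up to exactly $g$. The goal is then to show that $G$ is still not a subgraph of a minimal Cayley graph: from a hypothetical embedding $G\subseteq\Cay(\Gamma,S)$, the induced faithful no-lonely-color coloring together with a consistent orientation (Lemma~\ref{lem:dinolo}, Definition~\ref{def:dilonely}) should let one read off, at the boundary of each $\Theta_d$, a pattern that reconstructs the color/direction data around the vertex $x$ of $H$, hence an embedding of a subdivision of $H$ into a minimal Cayley graph — contradicting Spencer. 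For this the gadgets must be \emph{rigid}: every faithful no-lonely-color coloring of $\Theta_d$, together with a consistent orientation, must force the $d$ boundary arcs into a prescribed, reconstructible configuration. A more ambitious parallel route would be a direct counting argument, using the walk-determinism of Definition~\ref{def:dilonely}.3 to bound the number of $n$-vertex cubic subgraphs of minimal Cayley graphs below the $\exp(\Theta_g(n\log n))$ cubic graphs of girth $g$ on $n$ vertices.

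The main obstacle is precisely the rigidity of the cubic gadgets. Triangles are forced monochromatic under no-lonely-color colorings, which is exactly what makes the $K_4-e$ argument work, but cubic gadgets of girth $\geq 6$ carry a lot of chromatic and orientational freedom, and subdivisions add still more (a monochromatic path can be rerouted through an isomorphic copy), so it is conceivable that Spencer's obstruction genuinely dissolves under cubic-ification. In that case one would instead need cubic graphs built girth-class by girth-class to defeat Definitions~\ref{def:lonely} and~\ref{def:dilonely} directly, with the search guided by Algorithms~\ref{alg:all_no_lonely_colorings}--\ref{alg:checkCycles}. A realistic partial outcome of this program is a computer-assisted confirmation of the conjecture for each fixed small $g$ (say $g\leq 8$) via cubic cages and near-cages; the uniform-in-$g$ statement — the reason it remains a conjecture — is the genuine open core.
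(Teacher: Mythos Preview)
The statement is a \emph{conjecture} in the paper, not a theorem: the paper offers no proof and explicitly lists it among ``two principal questions open.'' So there is no paper proof to compare against. Your write-up is, correspondingly, a research plan rather than a proof, and you say as much in your last sentence.

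Within that plan, the settled pieces are fine: $g=3$ via $K_4\supseteq K_4-e$ and Babai's exclusion is correct, and $g=5$ is exactly the paper's triplex result. Everything else is speculative. For $g=4$ you only \emph{propose} to run the algorithms or to find a hand argument; neither is carried out, and there is no specific candidate graph. For $g\ge 6$ the degree-reduction-of-Spencer idea has a real gap that you yourself flag: there is no reason to expect the cubic hub gadgets to be rigid under faithful no-lonely-color colorings, and subdivisions only add freedom. Without that rigidity, an embedding of your cubic $G$ into a minimal Cayley graph need not yield an embedding of (a subdivision of) Spencer's $H$, so the contradiction does not fire. The alternative counting route is likewise only a heading, not an argument.

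In short: this is not a proof and the paper does not claim one either; your $g=3,5$ remarks are correct side observations, but the conjecture remains open exactly where you say it does.
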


\begin{conj}
Every generalized Petersen graph is an induced subgraph of a minimal Cayley graph.    
\end{conj}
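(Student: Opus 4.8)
The plan is to reduce to the case $d:=\gcd(n,k)>1$, since $d=1$ is exactly Theorem~\ref{thm:Petersen}, and then to search for a finite group $\Gamma$, a minimal generating set $S$, and an embedding of the vertex set of $G(n,k)$ into $\Gamma$ realizing the three edge classes of $G(n,k)$ as induced edges coming from elements of $S$: the single outer $n$-cycle, the $d$ inner cycles of length $m:=n/d$, and the spoke matching. Writing $k=d\kappa$ with $\gcd(\kappa,m)=1$, the inner cycles are indexed by the residue of $i$ modulo $d$ and each has length $m$, which is the combinatorial feature absent in the coprime case. First I would fix the outer vertices as a full cyclic coset, $u_i=b^i$ with $\mathrm{ord}(b)=n$, so that the color $b$ produces a single $n$-cycle, and then try to realize the spokes as one color $a$ and the inner edges as a further color.

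The key difficulty, and the reason the coprime proof does not extend verbatim, is a rigidity forced by \emph{parallel spokes}. If $u_i=b^i$ and every spoke has the same color $a$, then $v_i=b^ia$ is forced (up to inverting $a$), and the inner edge $v_iv_{i+k}$ necessarily carries the element $a^{-1}b^ka$. This element lies in $\langle a,b\rangle$, so taking it as a third generator violates minimality; meanwhile assigning the inner edges the color $b$ (as in Theorem~\ref{thm:Petersen}) now fails Definition~\ref{def:lonely}.3, since the outer cycle has length $n$ but the inner cycles have length $m<n$, and assigning them the color $a$ fails Definition~\ref{def:lonely}.1, since each $v_i$ would then have color-$a$ degree three. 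The same obstruction reappears in abelian or direct-product models $C_n\times C_m\times\cdots$: parallel spokes keep the inner vertices in a single coset of the inner factor, so that factor is never used and the inner edges collapse onto a power of $b$. Thus any viable construction must use \emph{non-parallel} spokes, equivalently a genuinely non-abelian twist in which the inner generator is independent of the outer and spoke generators while the spokes still form a single matching.

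Concretely, I would pursue three avenues. First, build $\Gamma$ from the coprime datum of $G(m,\kappa)$, for which $C_m\rtimes C_r$ works by Theorem~\ref{thm:Petersen}, and thread the $d$ copies of the inner $m$-cycle onto a single outer $n$-cycle through an extension $1\to C_m\rtimes C_r\to\Gamma\to C_d\to 1$ (or a wreath-type product $C_m\wr C_d$), choosing the embedding so that moving between consecutive residue classes along the outer cycle is exactly the extra generator; the hope is that conjugation then supplies the inner edges without adding a dependent generator. Second, exploit that only an \emph{induced} subgraph is required: as in Theorem~\ref{thm:Petersen} we use far fewer than $|\Gamma|$ vertices, so $\Gamma$ may be taken strictly larger to create room for an independent inner generator, after which minimality of $S$ is verified factor by factor. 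Third, reduce to the case where $d$ is a prime (or prime power) and iterate, assembling the general case by composing the prime-level constructions.

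The hard part will be the first avenue: producing a single uniform group and embedding that simultaneously yields one $n$-cycle, $d$ disjoint $m$-cycles, and a matching, while keeping $S$ inclusion-minimal. The tension is sharp because the outer cycle demands a cyclic group $C_n$, which does not split as $C_d\times C_m$ when $\gcd(d,m)>1$, whereas the inner cycles ask for an order-$m$ generator independent of $b$; reconciling these is precisely what the parallel-spoke rigidity forbids in the easy models. I would therefore first settle small non-coprime parameters with the algorithms described above to detect the pattern of a working group, and only then attempt to turn the observed groups into an infinite family.
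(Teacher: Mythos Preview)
The statement you are attempting is not proved in the paper: it is explicitly listed among the ``two principal questions'' left open, as a \emph{conjecture}. There is therefore no proof in the paper to compare against.

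Your proposal is likewise not a proof but a research outline. You correctly identify why the argument of Theorem~\ref{thm:Petersen} does not extend to $\gcd(n,k)>1$: with parallel spokes $v_i=b^ia$, the inner edges carry $a^{-1}b^ka$, which lies in $\langle a,b\rangle$ and so cannot be a third minimal generator, while recoloring them $b$ violates Definition~\ref{def:lonely}.3 and recoloring them $a$ violates Definition~\ref{def:lonely}.1. This diagnosis is sound and matches the obstruction the paper implicitly leaves open. However, none of your three avenues is carried out: the extension/wreath construction is only sketched, the ``make $\Gamma$ larger'' idea does not by itself produce an independent inner generator compatible with a single outer $n$-cycle, and the prime-power reduction is asserted without a mechanism for composing the pieces. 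Your own final paragraph concedes that the first avenue is the hard part and that you would fall back on computer search for small cases before guessing a family.

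In short: there is no gap relative to the paper, because the paper claims nothing here; but your write-up is a plan, not a proof, and should be presented as such.
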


\paragraph{Acknowledgments:} KK was supported by the MICINN grant PID2022-137283NB-C22 and by the Spanish State Research Agency, through the Severo Ochoa and María de Maeztu Program for Centers and Units of Excellence in R\&D (CEX2020-001084-M).
\small
\bibliography{fraglit}

\begin{thebibliography}{10}

\bibitem{B78}
{\sc L.~{Babai}}, {\em {Chromatic number and subgraphs of {C}ayley graphs}}.
\newblock {Theor. Appl. Graphs, Proc. Kalamazoo 1976, Lect. Notes Math. 642,
  10-22 (1978).}

\bibitem{Bab78}
{\sc L.~Babai}, {\em Embedding graphs in {Cayley} graphs}.
\newblock Probl{\`e}mes combinatoires et th{\'e}orie des graphes, {Orsay} 1976,
  {Colloq}. int. {CNRS} {No}. 260, 13-15 (1978).

\bibitem{B95}
{\sc L.~{Babai}}, {\em {Automorphism groups, isomorphism, reconstruction}}, in
  {Handbook of combinatorics. Vol. 1-2}, Amsterdam: Elsevier (North-Holland);
  Cambridge, MA: MIT Press, 1995, pp.~1447--1540.

\bibitem{BS85}
{\sc L.~Babai and V.~T. S{\'o}s}, {\em Sidon sets in groups and induced
  subgraphs of {Cayley} graphs}, Eur. J. Comb., 6 (1985), pp.~101--114.

\bibitem{CDG23}
{\sc K.~Coolsaet, S.~D'hondt, and J.~Goedgebeur}, {\em House of {Graphs} 2.0: a
  database of interesting graphs and more}, Discrete Appl. Math., 325 (2023),
  pp.~97--107.

\bibitem{Cox-50}
{\sc H.~S.~M. {Coxeter}}, {\em {Self-dual configurations and regular graphs.}},
  {Bull. Am. Math. Soc.}, 56 (1950), pp.~413--455.

\bibitem{DHY24}
{\sc J.~Davies, M.~Hatzel, and L.~Yepremyan}, {\em Counterexample to {Babai}'s
  lonely colour conjecture}.
\newblock Preprint, {arXiv}:2410.05199 [math.{CO}] (2024), 2024.

\bibitem{FM16}
{\sc M.~Farrokhi and A.~Mohammadian}, {\em Groups whose all (minimal) {C}ayley
  graphs have a given forbidden structure (research on finite groups and their
  representations, vertex operator algebras, and algebraic combinatorics)},
  Kyoto University Research Information Repository, 2003 (2016).

\bibitem{FGW:71}
{\sc R.~Frucht, J.~E. Graver, and M.~E. Watkins}, {\em The groups of the
  generalized {P}etersen graphs}, Proc. Cambridge Philos. Soc., 70 (1971),
  pp.~211--218.

\bibitem{GMK22}
{\sc I.~Garc{\'{\i}}a-Marco and K.~Knauer}, {\em On sensitivity in bipartite
  {Cayley} graphs}, J. Comb. Theory, Ser. B, 154 (2022), pp.~211--238.

\bibitem{GMK24}
{\sc I.~Garc{\'{\i}}a-Marco and K.~Knauer}, {\em Beyond symmetry in generalized
  {Petersen} graphs}, J. Algebr. Comb., 59 (2024), pp.~331--357.

\bibitem{GMK25}
{\sc I.~Garc{\'{\i}}a-Marco and K.~Knauer}, {\em Coloring minimal {Cayley}
  graphs}, Eur. J. Comb., 125 (2025), p.~9.
\newblock Id/No 104108.

\bibitem{God81}
{\sc C.~D. Godsil}, {\em Connectivity of minimal {Cayley} graphs}, Arch. Math.,
  37 (1981), pp.~473--476.

\bibitem{GI87}
{\sc C.~D. Godsil and W.~Imrich}, {\em Embedding graphs in {Cayley} graphs},
  Graphs Comb., 3 (1987), pp.~39--43.

\bibitem{HI96}
{\sc A.~Hassani and M.~A. Iranmanesh}, {\em On {Cayley} graphs related to
  finite groups with minimal generator set}, in Proceedings of the 27th annual
  Iranian mathematics conference, Shiraz, Iran, March 27--30, 1996, Shiraz:
  Shiraz University, Dept. of Mathematics, 1996, pp.~107--111.

\bibitem{KK19}
{\sc U.~Knauer and K.~Knauer}, {\em Algebraic graph theory. {Morphisms},
  monoids and matrices}, vol.~41 of De Gruyter Stud. Math., Berlin: De Gruyter,
  2nd revised and extended edition~ed., 2019.

\bibitem{LZ01}
{\sc C.~H. Li and S.~Zhou}, {\em On isomorphisms of minimal {Cayley} graphs and
  digraphs}, Graphs Comb., 17 (2001), pp.~307--314.

\bibitem{Lov-97}
{\sc M.~{Lovre\v{c}i\v{c} Sara\v{z}in}}, {\em {A note on the generalized
  Petersen graphs that are also Cayley graphs}}, {J. Comb. Theory, Ser. B}, 69
  (1997), pp.~226--229.

\bibitem{MS20}
{\sc {\v{S}}.~Miklavi{\v{c}} and P.~{\v{S}}parl}, {\em On minimal
  distance-regular {Cayley} graphs of generalized dihedral groups}, Electron.
  J. Comb., 27 (2020), pp.~research paper p4.33, 16.

\bibitem{Ned-95}
{\sc R.~{Nedela} and M.~{\v{S}koviera}}, {\em {Which generalized Petersen
  graphs are Cayley graphs?}}, {J. Graph Theory}, 19 (1995), pp.~1--11.

\bibitem{PR09}
{\sc I.~Pak and R.~Radoi{\v{c}}i{\'c}}, {\em Hamiltonian paths in {Cayley}
  graphs}, Discrete Math., 309 (2009), pp.~5501--5508.

\bibitem{Rhys}
{\sc P.~Potočnik, G.~Potočnik, and R.~Evans}, {\em Datasets of highly
  symmetric objects}.
\newblock \url{https://graphsym.net/}, 2024.
\newblock Accessed: April 17, 2025.

\bibitem{SS09}
{\sc A.~J. Slupik and V.~I. Sushchansky}, {\em Minimal generating sets and
  {Cayley} graphs of {Sylow} {{\(p\)}}-subgroups of finite symmetric groups.},
  Algebra Discrete Math., 2009 (2009), pp.~167--184.

\bibitem{code_repo}
{\sc {\'A}.~Soto~Gomez}, {\em Code for "{What is and is not inside a Cayley
  graph?}"}.
\newblock \url{https://github.com/alvarosg285/on_subgraphs_of_cayley_graphs},
  2025.

\bibitem{Spe83}
{\sc J.~Spencer}, {\em What's not inside a {Cayley} graph}, Combinatorica, 3
  (1983), pp.~239--241.

\bibitem{Ste-09}
{\sc A.~{Steimle} and W.~{Staton}}, {\em {The isomorphism classes of the
  generalized Petersen graphs}}, {Discrete Math.}, 309 (2009), pp.~231--237.

\bibitem{Watkins:69}
{\sc M.~E. Watkins}, {\em A theorem on {T}ait colorings with an application to
  the generalized {P}etersen graphs}, J. Combinatorial Theory, 6 (1969),
  pp.~152--164.

\bibitem{Whi01}
{\sc A.~T. White}, {\em Graphs of groups on surfaces. {Interactions} and
  models}, vol.~188 of North-Holland Math. Stud., Amsterdam: Elsevier, 2001.

\end{thebibliography}
\bibliographystyle{my-siam}

\end{document}